\newtheorem{theorem}{Theorem}
\theoremstyle{plain}
\newtheorem{algorithm}{Algorithm}
\newtheorem{conjecture}{Conjecture}
\newtheorem{definition}{Definition}
\newtheorem{lemma}{Lemma}
\newtheorem{remark}{Remark}
\begin{document}

\title[Solving an initial value problem with a given tolerance]{The method of solving a scalar initial value problem with a required tolerance}
\author{Alexander V. Lozovskiy}
\address[A. Lozovskiy]
{334, Laboratoire de Mecanique et Genie Civil de Montpellier, Universite Montpellier 2\newline%
\indent 34095 Montpellier, France}%
\email[A. Lozovskiy]{alexander.lozovskiy@univ-montp2.fr}%
\date{May 12, 2011}

\keywords{ordinary differential equations, numerical method, initial value problem, numerical integration, guaranteed tolerance}%

\begin{abstract}
A new numerical method for solving a scalar ordinary differential equation with a given initial condition is introduced. The method is using a numerical integration procedure for an equivalent integral equation and is called in this paper an integrating method. Bound to specific constraints, the method returns an approximate solution assuredly within a given tolerance provided by a user. This makes it different from a large variety of single- and multi-step methods for solving initial value problems that provide results up to some undefined error in the form $O(h^{k})$, where $h$ is a step size and $k$ is concerned with the method's accuracy. Advantages and disadvantages of the method are presented. Some improvements in order to avoid the latter are also made. Numerical experiments support these theoretical results.
\end{abstract}

\maketitle
\section{An idea of the method}
Consider a following differential equation supplied with an initial condition:
\begin{equation}\label{E:equation_main}
\begin{cases}
\frac{dy}{dx} = f(y)\cdot g(x), \text{ }0\leqslant x \leqslant b,\\
y(0) = y_{0}.
\end{cases}
\end{equation}
The purpose is to find $y(b)$, provided the solution can be extended from the initial condition at $x = 0$ to $x=b$. Classical single or multi-step methods, e.g. Runge-Kutta or Adams schemes, would discretize the interval $[0, b]$ into partition $\{0, x_{1}, x_{2}, ..., x_{N-1}, x_{N}\}$ with $x_{N} = b$ and compute iterative relations that would lead to some approximation of $y(b)$. The answer would be in the form $y(b) = y_{x_{N}} + O(h^{p})$, where $y_{x_{i}}$ denotes the approximate solution at point $x_{i}$ of the partition, $h$ is a characteristic mesh size and $p$ is the order of the method depending on the approximating scheme. The error $O(h^{p})$, in general, cannot be estimated. Some bounds may be found only in certain cases. These methods are mostly used as trustworthy, assumed $p$ is large enough to drive the whole error $O(h^{p})$ to zero. Negative effects may happen if, for example, unstability takes place.  

Now, assume functions $f$ and $g$ from \eqref{E:equation_main} satisfy the following conditions.

\begin{enumerate}[(A)]
\item $\int_{0}^{b}g(x)dx$ can be evaluated exactly.
\item Let $\tau(z) = \int_{0}^{z}g(x)dx$. Then $\tau^{'}(z) = g(z) > 0$ for any $z > 0$.
\item $f(y_{0}) > 0$ and $f^{'}(y) > 0$ for any $y \geqslant y_{0}$.
\item $\left(\frac{1}{f(y)}\right)^{''} > 0$ for any $y \geqslant y_{0}$.
\end{enumerate}
Let us call those the integrating conditions. From \eqref{E:equation_main}, using the separation of variables, it is easy to obtain an equivalent relation
\[
\int_{y_{0}}^{y(b)}\frac{dy}{f(y)} = \int_{0}^{b}g(x)dx.
\]
The term on the right-hand side is non other than $\tau(b)$. By assumption (A) of the integrating conditions, this term is given precisely, i.e. without an error. From the numerical point of view, this and assumption (B) of the integrating conditions allow to replace $\tau(b)$ with $b$ without loss of generality to reduce problem \eqref{E:equation_main} to
\begin{equation}\label{E:equation}
\begin{cases}
\frac{dy}{dx} = f(y), \text{ }0\leqslant x \leqslant b,\\
y(0) = y_{0},
\end{cases}
\end{equation}
and
\begin{equation}\label{E:integrating_equation}
\int_{y_{0}}^{y(b)}\frac{dy}{f(y)} = b.
\end{equation}
Denote
\[
p(y) := \frac{1}{f(y)}.
\]
So \eqref{E:integrating_equation} turns into
\begin{equation}\label{E:integrating_equation_final}
\int_{y_{0}}^{y(b)}p(y)dy = b.
\end{equation}
From now on, we will be focusing on integral equation \eqref{E:integrating_equation_final}. This is the core idea of the integrating method that will be presented below. The qualitative graph of $p(y)$ is shown on picture \ref{fig:func_py}. Such behavior is due to assumptions (C) and (D) of the integrating conditions.

\begin{figure}
\begin{pspicture}(4,1)(8,6.3)
   \pscurve[linewidth=0.5pt, linecolor=black]{->}(4,4.2)(5.8,2.8)(8,2.2)(10,2.1)
   \psline[linewidth=0.5pt]{<->}(2.5,5.9)(2.5,2)(11,2)
   \psline[linewidth=0.5pt, linecolor=blue, linestyle=dashed](4,4.2)(4,2)
   \psline[linewidth=0.5pt, linecolor=blue, linestyle=dashed](4,4.2)(2.5,4.2)
   \uput[0](10.6,1.7){$y$}
   \uput[0](3.6,1.7){$y_{0}$}
   \uput[0](1.5,4.2){$p(y_{0})$}
 \end{pspicture}
\caption{Function $p(y)$}
\label{fig:func_py}
\end{figure}
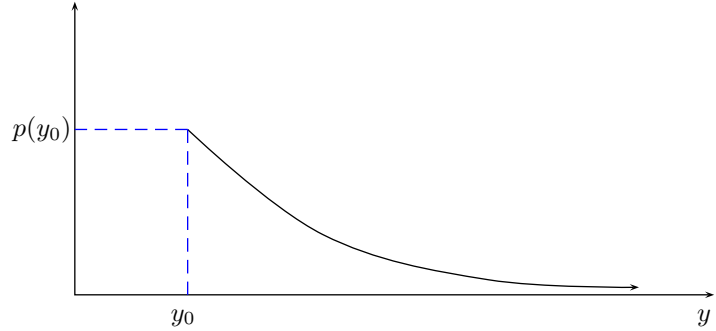
\begin{remark}
The condition that $\int_{y_{0}}^{+\infty}p(y)dy = c$ is equivalent to the condition that the solution of \eqref{E:equation} can only be extended to point $x = c$. Therefore, it is necessary to impose condition $b < c$, in order to deal with a solvable problem.  
\end{remark}

In order to solve \eqref{E:integrating_equation_final} for $y(b)$, consider the lower rectangular and the trapezoidal methods with constant step $h$ for the integral in \eqref{E:integrating_equation_final}, picture \ref{fig:int_rec_trap}.

\begin{figure}
\begin{pspicture}(4,1)(8,6.3)
   \pscurve[linewidth=0.25pt, linecolor=black]{->}(2,5)(2.5,4)(3,3.5)(3.5,3.2)(4,3)(4.5,2.857)(5,2.75)(7,2.5)(9,2.375)
   \psline[linewidth=0.5pt]{<->}(2,5.9)(2,2)(10.5,2)
   \psline[linewidth=0.5pt, linecolor=green, linestyle=solid](2,5)(2.5,4)(3,3.5)(3.5,3.2)(4,3)(4.5,2.857)(5,2.75)

   \psline[linewidth=0.5pt, linecolor=blue, linestyle=solid](2.5,4)(2.5,2)
   \psline[linewidth=0.5pt, linecolor=blue, linestyle=solid](2.5,4)(2,4)
   \psline[linewidth=0.5pt, linecolor=blue, linestyle=solid](3,3.5)(3,2)
   \psline[linewidth=0.5pt, linecolor=blue, linestyle=solid](3,3.5)(2.5,3.5)
   \psline[linewidth=0.5pt, linecolor=blue, linestyle=solid](3.5,3.2)(3.5,2)
   \psline[linewidth=0.5pt, linecolor=blue, linestyle=solid](3.5,3.2)(3,3.2)
   \psline[linewidth=0.5pt, linecolor=blue, linestyle=solid](4,3)(4,2)
   \psline[linewidth=0.5pt, linecolor=blue, linestyle=solid](4,3)(3.5,3)
   \psline[linewidth=0.5pt, linecolor=blue, linestyle=solid](4.5,2.857)(4.5,2)
   \psline[linewidth=0.5pt, linecolor=blue, linestyle=solid](4.5,2.857)(4,2.857)
   \psline[linewidth=0.5pt, linecolor=blue, linestyle=solid](5,2.75)(5,2)
   \psline[linewidth=0.5pt, linecolor=blue, linestyle=solid](5,2.75)(4.5,2.75)

   \uput[0](10.1,1.7){$y$}
   \uput[0](1.7,1.7){$y_{0}$}
   \uput[0](4.7,1.7){$y^{*}$}
   \uput[0](1,5){$p(y_{0})$}
 \end{pspicture}
\caption{Lower rectangular and trapezoidal integration rules}
\label{fig:int_rec_trap}
\end{figure}
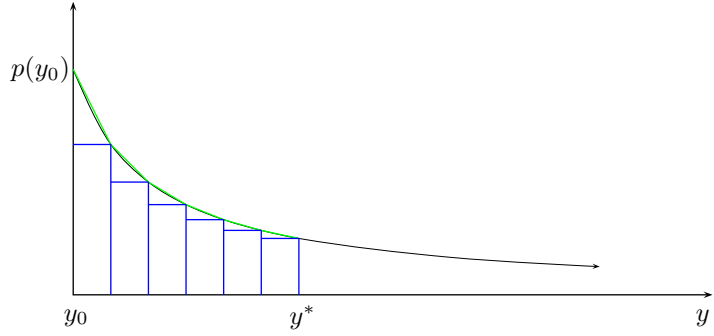

Denote the approximate integrals computed with the lower rectangular rule and the trapezoidal rule as $\sum_{l,h,N}$ and $\sum_{t,h,N}$ respectively, where $N$ is the number of subintervals of length $h$. Formally, these sums are defined as
\[
\sum_{l,h,N}:= \sum_{i=1}^{N}h\cdot p(y_{0}+h\cdot i),
\]
\[
\sum_{t,h,N}:= \sum_{i=1}^{N}\frac{h}{2}\cdot (p(y_{0}+h\cdot i)+p(y_{0}+h\cdot(i-1))).
\]
It should be clear that
\[
 \sum_{t,h,N} = \sum_{l,h,N} + A_{h,N},
\]
where $A_{h,N} = \frac{h}{2}\cdot(p(y_{0})-p(y_{0}+h\cdot N))$.
 
Since function $p(y)$ is concave up due to assumption (D) of the integrating conditions, the integration error on a single subinterval $[y_{0} + h\cdot i, y_{0} + h\cdot(i + 1)]$ is simply an area of one of the regions presented on picture \ref{fig:int_err}, depending on which of the two methods is currently used.

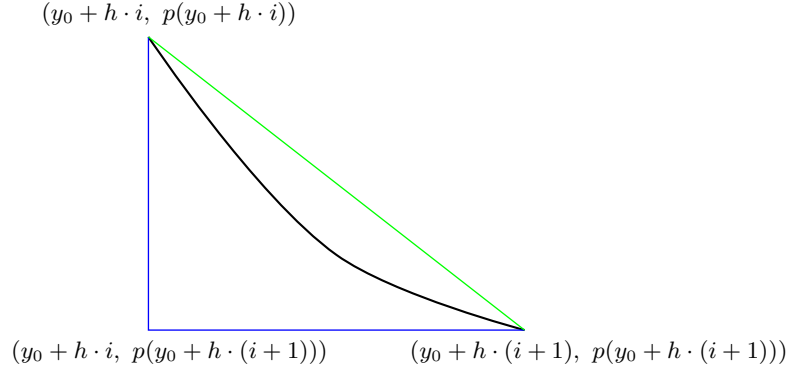
\begin{figure}
\begin{pspicture}(4,1)(8,6.5)
   \pscurve(3,5.9)(5.5,3)(8,2)
   \psline[linewidth=0.5pt, linecolor=blue](3,5.9)(3,2)(8,2)
   \psline[linewidth=0.5pt, linecolor=green, linestyle=solid](3,5.9)(8,2)
   \uput[0](6.3,1.7){\small $(y_{0} + h\cdot(i+1),\mbox{ } p(y_{0} + h\cdot(i+1)))$}
   \uput[0](1,1.7){\small $(y_{0} + h\cdot i, \mbox{ } p(y_{0} + h\cdot(i+1)))$} 
   \uput[0](1.4,6.2){\small $(y_{0} + h\cdot i, \mbox{ } p(y_{0} + h\cdot i))$}
 \end{pspicture}
\caption{Local integration errors for both rules}
\label{fig:int_err}
\end{figure}

More specifically, a region dounded by the blue lines and the black one corresponds to the lower rectangular rule and a region dounded by the green and the black lines corresponds to the trapezoidal rule.

The following relation is obvious, but nevertheless is very important:
\begin{equation}\label{E:inequality}
\sum_{l,h,N} < \int_{y_{0}}^{h\cdot N}p(y)dy < \sum_{l,h,N} + A_{h,N}.
\end{equation}

It allows to formulate the idea of the integrating method as follows.
For a given tolerance $\epsilon$ find such natural numbers $n_{1}$, $n_{2}$, $n_{1}<n_{2}$, that

\begin{itemize}
\item $h\cdot n_{2} - h\cdot n_{1}\leqslant \epsilon$,
\item $\sum_{l,h,n_{1}} + A_{h, n_{1}}\leqslant b \leqslant \sum_{l,h,n_{2}}$.
\end{itemize}
Once such pair is found, according to \eqref{E:inequality} we obtain
\[
 \int_{y_{0}}^{h\cdot n_{1}}p(y)dy < b < \int_{y_{0}}^{h\cdot n_{2}}p(y)dy.
\]
As $\int_{y_{0}}^{z}p(y)dy$ is an increasing function of $z$, it is clear that 
\[
h\cdot n_{1}<y(b)<h\cdot n_{2}. 
\]
Obviously, in either case
\[
 |y(b) - h\cdot n_{1}|<\epsilon,
\]
\[
 |y(b) - h\cdot n_{2}|<\epsilon.
\]
Thus set the approximate solution of \eqref{E:integrating_equation_final} as $y_{b} = h\cdot n_{1}$ or $y_{b} = h\cdot n_{2}$. For even better tolerance $\frac{\epsilon}{2}$, set the approximate solution as $y_{b} = \frac{1}{2}\cdot h\cdot(n_{1}+n_{2})$, which is a mid-point between the previous two. 

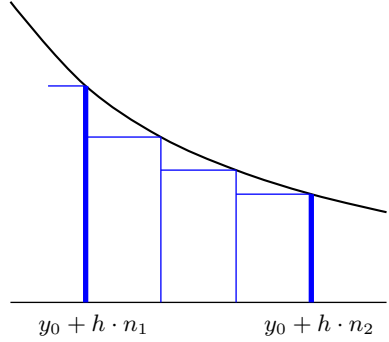
\begin{figure}
\begin{pspicture}(4,1)(8,6.3)
   \pscurve(3,6)(4,4.88)(5,4.2)(6,3.76)(7,3.44)(8,3.2)
   \psline[linewidth=0.5pt, linecolor=black](3,2)(8,2)
   \psline[linewidth=2pt, linecolor=blue, linestyle=solid](4,4.88)(4,2)
   \psline[linewidth=0.5pt, linecolor=blue, linestyle=solid](4,4.88)(3.5,4.88)
   \psline[linewidth=0.5pt, linecolor=blue, linestyle=solid](5,4.2)(5,2)
   \psline[linewidth=0.5pt, linecolor=blue, linestyle=solid](5,4.2)(4,4.2)
   \psline[linewidth=0.5pt, linecolor=blue, linestyle=solid](6,3.76)(6,2)
   \psline[linewidth=0.5pt, linecolor=blue, linestyle=solid](6,3.76)(5,3.76)
   \psline[linewidth=2pt, linecolor=blue, linestyle=solid](7,3.44)(7,2)
   \psline[linewidth=0.5pt, linecolor=blue, linestyle=solid](7,3.44)(6,3.44)
   \uput[0](6.2,1.7){\small $y_{0}+h\cdot n_{2}$}
   \uput[0](3.2,1.7){\small $y_{0}+h\cdot n_{1}$}
 \end{pspicture}
\caption{The idea of the integrating method}
\label{fig:idea}
\end{figure}

In other words, we managed to find an approximate solution $y$ of \eqref{E:equation} at point $x=b$ within provided tolerance $\epsilon$, using the integrating method applied to \eqref{E:integrating_equation_final}.
\section{A statement of two algorithms}
Once the idea of the method is clear, it is necessary to formulate an algorithm that realizes this idea consistently. Let superscript $(j)$ correspond to the $j$-th iteration of the algorithm. 

Start with executing the lower rectangular integration of $\int_{y_{0}}^{h\cdot N}p(y)dy$ with constant step $h = h^{(1)} := \epsilon$. Accumulate $\sum_{l,h^{(1)},N}$ until inequality $\sum_{l,h^{(1)},N}\geqslant b$ is met. Denote such $N$ as $N = n_{2}^{(1)}$.
Then go one step back, i.e. to $n_{1}^{(1)} := n_{2}^{(1)} - 1$, and compute $\sum_{t,h^{(1)},n_{2}^{(1)}-1} = \sum_{l,h^{(1)},n_{2}^{(1)}-1} + A_{h^{(1)}, n_{2}^{(1)}-1}$. If $\sum_{t,h^{(1)},n_{2}^{(1)}-1} \leqslant b$, then the approximate solution is found at the first iteration and $y_{b} = \frac{h^{(1)}}{2}\cdot(2\cdot n_{2}^{(1)}-1)$. The algorithm terminates.
However, it is possible that inequality $\sum_{t,h^{(1)},n_{2}^{(1)}-1} \leqslant b$ does not hold. Thus it is necessary to try the second iteration. Set $h^{(2)}:=\frac{h^{(1)}}{2} = \frac{\epsilon}{2}$. Redo the same lower rectangular integration and find $n_{2}^{(2)}$. Once again, it is important to go back in order to check whether the trapezoidal sum is less than or equal to $b$. Only this time, make two backward steps instead of one, from $n_{2}^{(2)}$ to $n_{1}^{(2)} := n_{2}^{(2)}-2$, as we halved $h^{(1)}$ to obtain $h^{(2)}$. Depending on whether inequality $\sum_{t,h^{(2)},n_{2}^{(2)}-2} \leqslant b$ is satisfied or not, we either terminate the algorithm and set the approximate solution as $y_{b} = \frac{h^{(2)}}{2}\cdot(2\cdot n_{2}^{(2)}-2)$ or continue to the next iteration, that is number 3. Note that for $y_{b}$ we are returning a mid-point between the two neighboring ones for better accuracy $\frac{\epsilon}{2}$. It is not mandatory if we simply wish to reach tolerance $\epsilon$, and can choose either $h^{(\cdot)}\cdot n_{1}^{(\cdot)}$ or $h^{(\cdot)}\cdot n_{2}^{(\cdot)}$ for $y_{b}$. 

So the algorithm may be formalized as follows. 
\begin{algorithm}
At iteration $j$, execute:
\begin{enumerate}
 \item $h^{(j)} = \frac{\epsilon}{j}$.
\item Find the smallest such integer $n_{2}$, that $\sum_{l,h^{(j)},n_{2}}\geqslant b$. Denote it as $n_{2}^{(j)}$.
\item If $\sum_{l,h^{(j)},n_{2}^{(j)}-j} + A_{h^{(j)}, n_{2}^{(j)}-j} \leqslant b$, set $y_{b} = \frac{h^{(j)}}{2}\cdot(2\cdot n_{2}^{(j)}-j)$ or $y_{b} = h^{(j)} \cdot(n_{2}^{(j)}-j)$ or $y_{b} = h^{(j)}\cdot n_{2}^{(j)}$. Either one returns an approximation of $y(b)$ within tolerance $\epsilon$. Terminate the algorithm.
\item Else, continue with iteration $j + 1$.
\end{enumerate}
\end{algorithm}
\begin{theorem}\label{T:basic}
 The above algorithm has a finite number of iterations. In other words, it converges.
\end{theorem}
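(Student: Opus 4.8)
The plan is to prove finiteness by showing that the termination test in step~3 of the algorithm is eventually satisfied; since the algorithm stops at the first iteration where the test holds, this is exactly convergence. Write $Y := y(b)$ for the exact solution, so \eqref{E:integrating_equation_final} reads $\int_{y_0}^{Y}p(y)\,dy = b$, and at iteration $j$ abbreviate $h := h^{(j)} = \epsilon/j$ and $n_2 := n_2^{(j)}$. Introduce the two bracketing points $U_j := y_0 + h\,n_2$ and $L_j := y_0 + h(n_2-j) = U_j - \epsilon$, the last equality holding because $j$ backward steps of length $\epsilon/j$ cover exactly $\epsilon$. The key preliminary estimate, read off from \eqref{E:inequality}, is a uniform bound on the trapezoidal overshoot: since $p$ is positive and decreasing (condition~(C)) the integral lies strictly between $\sum_{l,h,N}$ and $\sum_{t,h,N}=\sum_{l,h,N}+A_{h,N}$, which are $A_{h,N}$ apart, so each one-sided error is smaller than $A_{h,N}=\tfrac h2\bigl(p(y_0)-p(y_0+hN)\bigr)<\tfrac h2 p(y_0)$; in particular
\[
0 < \sum_{t,h,N} - \int_{y_0}^{y_0+hN}p(y)\,dy < \tfrac h2 p(y_0)\qquad\text{for every }N.
\]

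Next I would locate $U_j$ and pin down its limit. Minimality of $n_2$ gives $\sum_{l,h,n_2-1}<b\leqslant\sum_{l,h,n_2}$, so by \eqref{E:inequality} $\int_{y_0}^{U_j}p>\sum_{l,h,n_2}\geqslant b$ and hence $U_j>Y$. For a matching upper bound, $\sum_{l,h,n_2}=\sum_{l,h,n_2-1}+h\,p(U_j)<b+h\,p(y_0)$ together with $A_{h,n_2}<\tfrac h2 p(y_0)$ yields $\int_{y_0}^{U_j}p<b+\tfrac{3h}{2}p(y_0)$. Therefore
\[
0 < \int_{Y}^{U_j}p(y)\,dy < \tfrac{3h^{(j)}}{2}p(y_0)\longrightarrow 0 .
\]
Because $p$ is continuous and strictly positive, this forces $U_j\to Y$ (a subsequence with $U_{j_k}\geqslant Y+\eta$ would keep $\int_Y^{U_{j_k}}p\geqslant\int_Y^{Y+\eta}p>0$), and consequently $L_j=U_j-\epsilon\to Y-\epsilon$.

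Finally I would verify the test. Applying the uniform bound with $N=n_2-j$, and using $\int_{y_0}^{L_j}p = b-\int_{L_j}^{Y}p$ once $j$ is large enough that $y_0\leqslant L_j<Y$, gives
\[
\sum_{t,h,n_2-j} < \int_{y_0}^{L_j}p(y)\,dy + \tfrac h2 p(y_0) = b - \int_{L_j}^{Y}p(y)\,dy + \tfrac h2 p(y_0).
\]
Thus the test $\sum_{t,h,n_2-j}\leqslant b$ is guaranteed as soon as $\int_{L_j}^{Y}p(y)\,dy \geqslant \tfrac{h^{(j)}}{2}p(y_0)$. In the generic case $Y-y_0>\epsilon$ we have $L_j\to Y-\epsilon\in[y_0,Y)$, so by continuity $\int_{L_j}^{Y}p\to\int_{Y-\epsilon}^{Y}p=:c_0>0$, while the right-hand side $\tfrac{\epsilon}{2j}p(y_0)\to 0$; hence the sufficient inequality, and with it the termination test, holds for all large $j$ and the algorithm stops. (In the degenerate case $Y-y_0\leqslant\epsilon$ the index $n_2-j$ reaches $0$, the trapezoidal sum is empty, and the test holds trivially, so the algorithm terminates at once.)

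The main obstacle I anticipate is the middle step: converting the crude one-step estimate $\int_{Y}^{U_j}p<\tfrac{3h}{2}p(y_0)$ into genuine control $L_j\to Y-\epsilon$, and then correctly matching the fixed positive quantity $\int_{Y-\epsilon}^{Y}p$ against the vanishing error budget $\tfrac h2 p(y_0)$. Care is also needed to keep the lower bracket inside the domain $y\geqslant y_0$ where $p$ is defined and positive (condition~(C)), which is precisely what distinguishes the generic case from the degenerate one treated separately.
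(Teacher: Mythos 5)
Your proof is correct, but it takes a genuinely different route from the paper's. The paper decomposes the trapezoidal sum $a_{j}=\sum_{t,h^{(j)},n_{2}^{(j)}-j}$ into three pieces, bounds each using Lemma 1 (i.e. $h^{(j)}n_{2}^{(j)}\leqslant h^{(1)}n_{2}^{(1)}$) together with the monotonicity of $p$, and extracts the explicit sufficient threshold \eqref{E:criterion}: once $j$ exceeds a constant computable from the first iteration alone, the termination test passes. You instead argue asymptotically: from \eqref{E:inequality} and the minimality of $n_{2}^{(j)}$ you show the upper bracket $U_{j}=y_{0}+h^{(j)}n_{2}^{(j)}$ converges to $y(b)$ from above, hence $L_{j}=U_{j}-\epsilon\to y(b)-\epsilon$, and you then pit the fixed positive margin $\int_{y(b)-\epsilon}^{y(b)}p$ against the vanishing trapezoidal error budget $\tfrac{h^{(j)}}{2}p(y_{0})$. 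The underlying mechanism is the same (an $O(h^{(j)})$ overshoot versus a gap of order $\epsilon$), but the trade-offs differ: your argument is softer and cleaner, dispenses with Lemma 1, and is the only one of the two that explicitly addresses the degenerate case $y(b)-y_{0}\leqslant\epsilon$ where the backward index can hit zero; on the other hand, it yields no computable iteration bound, whereas the paper's explicit constant \eqref{E:criterion} is not a throwaway --- it is the engine of Algorithm 2 and of the later theorem on intermediate mesh points, so your proof could not substitute for the paper's in what follows. Two small blemishes, neither fatal: the upper (trapezoidal) half of the bracket \eqref{E:inequality} relies on convexity, condition (D), not merely on $p$ being positive and decreasing as you state; and at the boundary $y(b)-y_{0}=\epsilon$ one has $U_{j}>y(b)$, hence $n_{2}^{(j)}-j\geqslant 1$, so the sum is not empty there and the algorithm need not terminate ``at once'' --- but your generic-case estimate already covers this situation since $\int_{L_{j}}^{y(b)}p\to b>0$ while $\tfrac{h^{(j)}}{2}p(y_{0})\to 0$.
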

Before we prove this theorem, it is necessary to prove the following lemma first.
\begin{lemma}
\[
 h^{(j)}\cdot n_{2}^{(j)}\leqslant h^{(1)}\cdot n_{2}^{(1)}
\]
for any iteration $j$.
\end{lemma}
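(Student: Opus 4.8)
The plan is to exploit the fact that the step sizes satisfy $h^{(1)} = j\cdot h^{(j)}$, so that the coarse grid of iteration $1$ is a subgrid of the fine grid of iteration $j$. In particular the coarse right endpoint $y_{0}+h^{(1)}\cdot n_{2}^{(1)}$ coincides with the fine grid node $y_{0}+h^{(j)}\cdot(j\cdot n_{2}^{(1)})$. I would therefore recast the whole lemma as the integer inequality $n_{2}^{(j)}\leqslant j\cdot n_{2}^{(1)}$; multiplying this by $h^{(j)}$ and using $h^{(j)}\cdot j=\epsilon=h^{(1)}$ gives exactly $h^{(j)}\cdot n_{2}^{(j)}\leqslant h^{(1)}\cdot n_{2}^{(1)}$.

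The core step is a monotonicity property of the lower rectangular rule under refinement. Since $f(y_{0})>0$ and $f^{'}>0$ on $[y_{0},\infty)$ by assumption (C), the function $p=1/f$ is positive and strictly decreasing, so on each subinterval the right endpoint realizes the smallest value of $p$. I would show that subdividing a single coarse rectangle of width $h^{(1)}$ into $j$ fine rectangles of width $h^{(j)}$ can only increase the contribution: each fine term $h^{(j)}\cdot p(\,\cdot\,)$ is evaluated at a node no larger than the coarse right endpoint, where $p$ is at least as large, so the $j$ fine rectangles together dominate the one coarse rectangle. Summing this local comparison over all $n_{2}^{(1)}$ coarse subintervals yields
\[
\sum_{l,h^{(j)},\,j\cdot n_{2}^{(1)}}\;\geqslant\;\sum_{l,h^{(1)},\,n_{2}^{(1)}}.
\]

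Combining the pieces is then immediate. By the defining property of $n_{2}^{(1)}$ we have $\sum_{l,h^{(1)},n_{2}^{(1)}}\geqslant b$, hence $\sum_{l,h^{(j)},\,j\cdot n_{2}^{(1)}}\geqslant b$. But $n_{2}^{(j)}$ is, by construction, the smallest integer $N$ for which $\sum_{l,h^{(j)},N}\geqslant b$; since $j\cdot n_{2}^{(1)}$ is one such $N$, this forces $n_{2}^{(j)}\leqslant j\cdot n_{2}^{(1)}$, which is precisely the integer inequality sought.

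I expect the only real content to reside in the monotonicity step, and I would take care to phrase the rectangle comparison using the monotonicity of $p$ \emph{alone}; note that the convexity assumption (D) plays no role here and is reserved for the two-sided estimate \eqref{E:inequality} used elsewhere. The one minor point worth verifying explicitly is that the coarse endpoint is genuinely a node of the fine grid, which holds precisely because $j$ is an integer and $h^{(1)}/h^{(j)}=j$.
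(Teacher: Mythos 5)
Your proposal is correct and follows essentially the same route as the paper: reduce the lemma to the integer inequality $n_{2}^{(j)}\leqslant j\cdot n_{2}^{(1)}$, establish $\sum_{l,h^{(j)},\,j\cdot n_{2}^{(1)}}\geqslant\sum_{l,h^{(1)},\,n_{2}^{(1)}}\geqslant b$ via the groupwise comparison of $j$ fine rectangles against one coarse rectangle using only the monotonicity of $p$, and conclude by the minimality of $n_{2}^{(j)}$. Your side remark that assumption (D) is not needed here is accurate and consistent with the paper, which invokes only the decreasing behavior of $p$.
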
\label{L:helping}
\begin{proof}
The lower rectangular integration rule applied to $\int p(y)dy$ creates subintervals of length $h^{(1)}=\epsilon$ each on $y$-axis, starting from $y = y_{0}$. The integration at any of the following iterations $j>1$ uses subintervals of length $h^{(j)} = \frac{\epsilon}{j}$ each and thus contains nodes created by the first iteration in its set of nodes. It is also obvious that
\[
 \sum_{i=(k-1)\cdot j+1}^{k\cdot j}p(y_{0}+h^{(j)}\cdot i)\cdot h^{(j)} \geqslant p(y_{0}+h^{(1)}\cdot k)\cdot h^{(1)}
\]
for any natural numbers $k$ and $j$, since $p(y)$ is a decreasing function. Picture \ref{fig:top} demonstrates this property in the case $j=3$. From this, it follows that
\[
 \sum_{l,h^{(j)},j\cdot n_{2}^{(1)}}\geqslant\sum_{l,h^{(1)}, n_{2}^{(1)}}.
\]
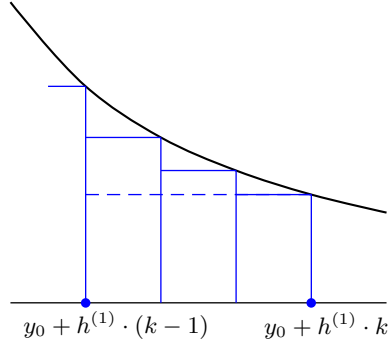
\begin{figure}
\begin{pspicture}(4,1)(8,6.3)
   \pscurve(3,6)(4,4.88)(5,4.2)(6,3.76)(7,3.44)(8,3.2)
   \psline[linewidth=0.5pt, linecolor=black](3,2)(8,2)
   \psline[linewidth=0.5pt, linecolor=blue, linestyle=solid](4,4.88)(4,2)
   \psline[linewidth=0.5pt, linecolor=blue, linestyle=solid](4,4.88)(3.5,4.88)
   \psline[linewidth=0.5pt, linecolor=blue, linestyle=solid](5,4.2)(5,2)
   \psline[linewidth=0.5pt, linecolor=blue, linestyle=solid](5,4.2)(4,4.2)
   \psline[linewidth=0.5pt, linecolor=blue, linestyle=solid](6,3.76)(6,2)
   \psline[linewidth=0.5pt, linecolor=blue, linestyle=solid](6,3.76)(5,3.76)
   \psline[linewidth=0.5pt, linecolor=blue, linestyle=solid](7,3.44)(7,2)
   \psline[linewidth=0.5pt, linecolor=blue, linestyle=solid](7,3.44)(6,3.44)
   \psline[linewidth=0.5pt, linecolor=blue, linestyle=dashed](4,3.44)(7,3.44)
   \psdots[linecolor=blue](4,2)(7,2)
   \uput[0](6.2,1.7){\small $y_{0}+h^{(1)}\cdot k$}
   \uput[0](3,1.7){\small $y_{0}+h^{(1)}\cdot (k-1)$}
 \end{pspicture}
\caption{The lower rectangular integration for $j = 1$ and $j = 3$}
\label{fig:top}
\end{figure}

But
\[
 \sum_{l,h^{(1)}, n_{2}^{(1)}}\geqslant b.
\]
Since, by definition, $n_{2}^{(j)}$ is the smallest such natural number that
\[
 \sum_{l,h^{(j)},n_{2}^{(j)}}\geqslant b,
\]
 it is evident that
\[
 n_{2}^{(j)}\leqslant j\cdot n_{2}^{(1)}.
\]
From here, we obtain 
\[
y_{0} + h^{(j)}\cdot n_{2}^{(j)} \leqslant y_{0} + h^{(j)}\cdot j\cdot n_{2}^{(1)} = y_{0} + h^{(1)}\cdot n_{2}^{(1)}.
\]
This implies the statement of the lemma.
\end{proof}
We are now ready to prove the theorem.

\begin{proof}
Consider sequence $a_{j} = \sum_{t,h^{(j)},n_{2}^{(j)}-j}$, $j>1$. It is evident that
\begin{equation}\label{E:trapezoidal_sum}
 a_{j} = \sum_{l,h^{(j)},n_{2}^{(j)}-j} + A_{h^{(j)},n_{2}^{(j)}-j} = 
\end{equation}
\[
= \sum_{l,h^{(j)},n_{2}^{(j)}-1} + A_{h^{(j)},n_{2}^{(j)}-j} - \sum_{i = n_{2}^{(j)}+1-j}^{n_{2}^{(j)}-1}h^{(j)}\cdot
p(y_{0}+h^{(j)}\cdot i).
\]
Analyze these three terms separately. By definition of $n_{2}^{(j)}$, inequality
\[
 \sum_{l,h^{(j)},n_{2}^{(j)}-1}< b
\]
always holds. Also, according to Lemma 1 and the fact that $p(y)$ is a decreasing function, the second term is bounded in a way presented below:
\[
A_{h^{(j)},n_{2}^{(j)}-j} = \frac{h^{(j)}}{2}\cdot(p(y_{0})-p(y_{0}+h^{(j)}\cdot n_{2}^{(j)}-h^{(1)})) \leqslant
\]
\[
 \leqslant \frac{h^{(j)}}{2}\cdot(p(y_{0})-p(y_{0}+h^{(1)}\cdot n_{2}^{(1)}-h^{(1)})).
\]

The third term is non other than the lower rectangular sum with the step size $h^{(j)}$ between the nodes with numbers $n_{1}^{(j)}$ and $n_{2}^{(j)}-1$, taken with a negative sign. The following lower bound for it will be helpful. Picture \ref{fig:top} helps understand its meaning.
\[
 \sum_{i = n_{2}^{(j)}+1-j}^{n_{2}^{(j)}-1}h^{(j)}\cdot
p(y_{0}+h^{(j)}\cdot i) \geqslant p(y_{0}+h^{(j)}\cdot(n_{2}^{(j)}-1))\cdot(n_{2}^{(j)}-1-n_{1}^{(j)})\cdot h^{(j)}.
\]
As $n_{1}^{(j)} = n_{2}^{(j)}-j$, we obtain
\[
-\sum_{i = n_{2}^{(j)}+1-j}^{n_{2}^{(j)}-1}h^{(j)}\cdot
p(y_{0}+h^{(j)}\cdot i) \leqslant -p(y_{0}+h^{(j)}\cdot(n_{2}^{(j)}-1))\cdot\left(j-1\right)\cdot h^{(j)}.
\]
Apply Lemma 1 and the fact that $p(y)$ decreases:
\[
 p(y_{0}+h^{(j)}\cdot(n_{2}^{(j)}-1))\geqslant p(y_{0}+h^{(1)}\cdot n_{2}^{(1)}).
\]
 So we end up with
\[
 a_{j}<b+\frac{h^{(j)}}{2}\cdot(p(y_{0})-p(y_{0}+h^{(1)}\cdot n_{2}^{(1)}-h^{(1)})) -p(y_{0}+h^{(1)}\cdot n_{2}^{(1)})\cdot(j-1)\cdot h^{(j)}.
\]
The algorithm terminates when inequality $a_{j}\leqslant b$ holds true. It will be then sufficient to require that
\[
 \frac{h^{(j)}}{2}\cdot(p(y_{0})-p(y_{0}+h^{(1)}\cdot n_{2}^{(1)}-h^{(1)})) \leqslant p(y_{0}+h^{(1)}\cdot n_{2}^{(1)})\cdot(j-1)\cdot h^{(j)}.
\]
Isolate $j$ to obtain
\begin{equation}\label{E:criterion}
 j\geqslant 1 + \frac{1}{2}\cdot\left(\frac{p(y_{0})-p(y_{0}+h^{(1)}\cdot n_{2}^{(1)}-h^{(1)})}{p(y_{0}+h^{(1)}\cdot n_{2}^{(1)})}\right).
\end{equation}
Thus, once $j$ becomes large enough to satisfy \eqref{E:criterion}, the trapezoidal sum satisfies inequality $\sum_{t, h^{(j)}, n_{2}^{(j)}-j}\leqslant b$ and the algorithm terminates immediately. 
\end{proof}
\begin{remark}
 For simplicity, condition \eqref{E:criterion} may be replaced by a stronger one:
\begin{equation}\label{E:criterion_simple}
 j\geqslant \frac{1}{2}\cdot\left(1 + \frac{p(y_{0})}{p(y_{0}+h^{(1)}\cdot n_{2}^{(1)})}\right).
\end{equation}
Of course, this may increase the cost.
\end{remark}
Several observations must be pointed out.

 It is easy to see that the iterations in Algorithm 1 are independet in such a sense, that they can be carried out in any order since none of the iterations use the data obtained by the previous ones. They are presented in an order of increasing cost, since at every next iteration there are more nodes to evaluate $p(y)$ at during the lower rectangular integration.

 Denote the smallest integer $j$ satisfying \eqref{E:criterion} as $j_{s}$. The condition that $j\geqslant j_{s}$ is sufficient but is not necessary for the convergence of the method. It may be that $\sum_{t, h^{(j)}, n_{2}^{(j)}-j}$ becomes less than or equal to $b$ before $j$ is large enough to satsify $j\geqslant j_{s}$. The smallest $j$ of all those granting the convergence, denoted $j_{a}$, is not known. This explains why Algorithm 1 starts with smaller $j$ and does not jump right to $j_{s}$ that assures the convergence with the highest cost of all. It seems this strategy may prove cost-saving if the right-hand side of \eqref{E:criterion} is very large and therefore it makes sense to try fewer computations. However, as will be shown below, Algorithm 1 is only valuable from the theoretical point of view and must be avoided in real life computations due to availability of a better algorithm.
\begin{definition}\label{D:trapezoidal}
 $n_{3}^{(j)}$ is the largest such $N$, that satisfies
\[
 \sum_{t,h^{(j)},N}\leqslant b.
\]
\end{definition}
\begin{lemma}\label{L:trapezoidal}
\[
 n_{2}^{(j)}>n_{3}^{(j)}
\]
and
\[
 h^{(j)}\cdot n_{3}^{(j)}\geqslant h^{(1)}\cdot n_{3}^{(1)}.
\]
\end{lemma}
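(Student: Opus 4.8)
The plan is to treat the two assertions separately, since they rely on different elementary facts. For the first inequality $n_{2}^{(j)} > n_{3}^{(j)}$, I would exploit the decomposition $\sum_{t,h,N} = \sum_{l,h,N} + A_{h,N}$ together with the observation that $A_{h^{(j)},N} > 0$ for every $N \geqslant 1$: since $p$ is strictly decreasing, $p(y_{0}) > p(y_{0}+h^{(j)}\cdot N)$ and hence $A_{h^{(j)},N} = \frac{h^{(j)}}{2}\cdot(p(y_{0})-p(y_{0}+h^{(j)}\cdot N)) > 0$. By Definition \ref{D:trapezoidal} we have $\sum_{t,h^{(j)},n_{3}^{(j)}} \leqslant b$, so $\sum_{l,h^{(j)},n_{3}^{(j)}} = \sum_{t,h^{(j)},n_{3}^{(j)}} - A_{h^{(j)},n_{3}^{(j)}} < b$. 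Because $n_{2}^{(j)}$ is, by definition, the smallest index at which the lower rectangular sum reaches $b$, this strict inequality $\sum_{l,h^{(j)},n_{3}^{(j)}} < b$ immediately forces $n_{2}^{(j)} > n_{3}^{(j)}$.

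For the second inequality I would mirror the structure of the proof of Lemma 1, replacing the lower rectangular sum by the trapezoidal sum and reversing the direction of the refinement estimate. The key geometric fact to establish is the per-block monotonicity
\[
\sum_{t,h^{(j)},\, j\cdot k} \leqslant \sum_{t,h^{(1)},\, k}
\]
for every natural number $k$; that is, refining the trapezoidal rule can only decrease the computed value. This is precisely where assumption (D), the convexity of $p$, enters: over each coarse block $[y_{0}+h^{(1)}\cdot(k-1),\, y_{0}+h^{(1)}\cdot k]$ the single coarse trapezoid lies under the chord joining the two endpoints of the graph, whereas the $j$ fine trapezoids lie under the piecewise-linear interpolant through the intermediate fine nodes; by convexity that interpolant sits below the coarse chord, so the fine contribution does not exceed the coarse one. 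Summing over the $n_{3}^{(1)}$ coarse blocks yields the displayed inequality.

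With this in hand the conclusion is immediate. Taking $k = n_{3}^{(1)}$ and using $\sum_{t,h^{(1)},n_{3}^{(1)}} \leqslant b$ from Definition \ref{D:trapezoidal}, I obtain $\sum_{t,h^{(j)},\, j\cdot n_{3}^{(1)}} \leqslant b$, so $N = j\cdot n_{3}^{(1)}$ is an admissible index for the trapezoidal sum at step $j$. Since $n_{3}^{(j)}$ is the largest such index, $n_{3}^{(j)} \geqslant j\cdot n_{3}^{(1)}$, and therefore $h^{(j)}\cdot n_{3}^{(j)} \geqslant h^{(j)}\cdot j\cdot n_{3}^{(1)} = h^{(1)}\cdot n_{3}^{(1)}$, which is exactly the second claim.

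I expect the only genuine obstacle to be the careful justification of the per-block monotonicity, namely arguing that the fine piecewise-linear interpolant lies entirely below the coarse chord and not merely at the nodes. This follows because both the coarse chord and each fine segment are linear, and the fine nodes lie on the graph of the convex function $p$, hence weakly below the coarse chord; a linear segment whose two endpoints both lie below a given line stays below that line throughout its span, so integrating preserves the inequality block by block. Once this is phrased cleanly, paralleling the figure used for Lemma 1, the remainder of the argument is routine bookkeeping.
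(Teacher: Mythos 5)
Your proposal is correct and follows essentially the same route as the paper: the first inequality via $\sum_{l,h^{(j)},n_{3}^{(j)}}<\sum_{t,h^{(j)},n_{3}^{(j)}}\leqslant b$ combined with the minimality of $n_{2}^{(j)}$, and the second via the refinement inequality $\sum_{t,h^{(j)},j\cdot n_{3}^{(1)}}\leqslant\sum_{t,h^{(1)},n_{3}^{(1)}}$ and the maximality of $n_{3}^{(j)}$. The only difference is that you spell out the convexity argument behind the refinement inequality (fine nodes lie on the graph, hence below the coarse chord), which the paper merely asserts as a ``topological property''; this is a welcome addition rather than a deviation.
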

\begin{proof}
 By definition of $n_{3}^{(j)}$,
\[
 \sum_{t,h^{(j)},n_{3}^{(j)}}\leqslant b.
\]
But
\[
 \sum_{l,h^{(j)},n_{3}^{(j)}}<\sum_{t,h^{(j)},n_{3}^{(j)}}.
\]
Since
\[
 \sum_{l,h^{(j)},n_{2}^{(j)}}\geqslant b
\]
and $\sum_{l, h^{(j)},N}$ is an increasing function of $N$, the first inequality immediately follows.

To prove the second inequality, we use the idea similar to the one employed in the proof of Lemma 1. By topological properties of a trapezoidal sum applied to a function with a positive second derivative, it is clear that
\[
 \sum_{t,h^{(j)},j\cdot n_{3}^{(1)}}\leqslant\sum_{t,h^{(1)},n_{3}^{(1)}}
\]
for any natural number $j$.
But
\[
 \sum_{t,h^{(1)},n_{3}^{(1)}}\leqslant b.
\]
By definition of $n_{3}^{(j)}$, we get
\[
 n_{3}^{(j)}\geqslant j\cdot n_{3}^{(1)}.
\]
Multiply by $h^{(j)}$ to obtain
\[
 h^{(j)}\cdot n_{3}^{(j)}\geqslant h^{(j)}\cdot j\cdot n_{3}^{(1)} = h^{(1)}\cdot n_{3}^{(1)}.
\]

\end{proof}

\begin{theorem}\label{T:basic_n}
For Algorithm 1 to terminate, it is necessary that $j$ satisfies
\begin{equation}\label{E:criterion_nec}
 j> 1 + \frac{1}{2}\cdot\frac{p(y_{0})-p(y_{0}+h^{(1)}\cdot n_{3}^{(1)}-h^{(1)})-2\cdot p(y_{0}+h^{(1)}\cdot n_{3}^{(1)})}{2\cdot p(y_{0}+h^{(1)}\cdot n_{3}^{(1)}-h^{(1)})}.
\end{equation}
\end{theorem}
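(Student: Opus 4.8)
The plan is to mirror the proof of Theorem \ref{T:basic}, but now producing a \emph{lower} bound on the trapezoidal sum that controls termination, rather than the upper bound used there. Recall that the algorithm stops at iteration $j$ precisely when $a_{j}:=\sum_{t,h^{(j)},n_{2}^{(j)}-j}\leqslant b$. The idea is that if I can bound $a_{j}$ from below by an expression of the form $b+X-Y\cdot(j-1)$ with $X,Y>0$ independent of $j$, then termination forces $b\geqslant a_{j}>b+X-Y\cdot(j-1)$, hence $Y\cdot(j-1)>X$ and $j>1+X/Y$, which is exactly the shape of \eqref{E:criterion_nec}. So the task is to locate the right $X$ and $Y$ and to express them in terms of the fixed iteration-$1$ quantity $n_{3}^{(1)}$.

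First I would expand $a_{j}$ by the same identity \eqref{E:trapezoidal_sum} already established,
\[
a_{j}=\sum_{l,h^{(j)},n_{2}^{(j)}-1}+A_{h^{(j)},n_{2}^{(j)}-j}-\sum_{i=n_{2}^{(j)}+1-j}^{n_{2}^{(j)}-1}h^{(j)}\cdot p(y_{0}+h^{(j)}\cdot i),
\]
and then estimate each of the three pieces in the direction opposite to Theorem \ref{T:basic}. For the first piece I would use $\sum_{l,h^{(j)},n_{2}^{(j)}}\geqslant b$ and the identity $\sum_{l,h^{(j)},n_{2}^{(j)}-1}=\sum_{l,h^{(j)},n_{2}^{(j)}}-h^{(j)}\cdot p(y_{0}+h^{(j)}\cdot n_{2}^{(j)})$ to obtain the lower bound $b-h^{(j)}\cdot p(y_{0}+h^{(j)}\cdot n_{2}^{(j)})$. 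For $A_{h^{(j)},n_{2}^{(j)}-j}$ I would rewrite it, as before, as $\tfrac{h^{(j)}}{2}\cdot(p(y_{0})-p(y_{0}+h^{(j)}\cdot n_{2}^{(j)}-h^{(1)}))$ and bound it from below. For the subtracted sum, which has $j-1$ terms, I would bound it from above by $j-1$ times its largest term, namely the one at the left endpoint $i=n_{2}^{(j)}+1-j$.

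The crucial step — and the one I expect to be the main obstacle — is converting all the iteration-$j$ arguments $y_{0}+h^{(j)}\cdot n_{2}^{(j)}$ into the fixed quantity $y_{0}+h^{(1)}\cdot n_{3}^{(1)}$ appearing in \eqref{E:criterion_nec}. Here I would combine Lemma \ref{L:trapezoidal} with Lemma \ref{L:helping}: since $n_{2}^{(j)}>n_{3}^{(j)}$ and $h^{(j)}\cdot n_{3}^{(j)}\geqslant h^{(1)}\cdot n_{3}^{(1)}$, one has $h^{(j)}\cdot n_{2}^{(j)}>h^{(1)}\cdot n_{3}^{(1)}$, and because $p$ is decreasing this lets me replace each $p$ evaluated at an iteration-$j$ node by $p$ at the corresponding iteration-$1$ node. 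The delicate bookkeeping is to apply monotonicity in the correct direction for each term: I must \emph{upper}-bound $p$ wherever it enters $a_{j}$ with a minus sign and \emph{lower}-bound it where it enters with a plus sign, so that the combined estimate stays a genuine lower bound on $a_{j}$ (this is what makes the resulting condition necessary rather than sufficient). In this way the $A$-term supplies $p(y_{0})$ and $p(y_{0}+h^{(1)}\cdot n_{3}^{(1)}-h^{(1)})$, the subtracted sum supplies another $p(y_{0}+h^{(1)}\cdot n_{3}^{(1)}-h^{(1)})$, and the first piece supplies $p(y_{0}+h^{(1)}\cdot n_{3}^{(1)})$.

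Finally I would assemble these into
\[
a_{j}\geqslant b+\tfrac{h^{(j)}}{2}\cdot(p(y_{0})-p(y_{0}+h^{(1)}\cdot n_{3}^{(1)}-h^{(1)}))-h^{(j)}\cdot p(y_{0}+h^{(1)}\cdot n_{3}^{(1)})-(j-1)\cdot h^{(j)}\cdot p(y_{0}+h^{(1)}\cdot n_{3}^{(1)}-h^{(1)}),
\]
impose the termination requirement $a_{j}\leqslant b$, cancel the common positive factor $h^{(j)}$, and isolate $j$; the strict inequalities entering through $n_{2}^{(j)}>n_{3}^{(j)}$ produce the strict sign in the conclusion. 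Solving for $j$ then yields the necessary condition \eqref{E:criterion_nec}, the precise numerical constants being a routine consequence of the per-interval estimates above. Interpreted together with Theorem \ref{T:basic}, this sandwiches the terminating iteration between the explicit lower bound given here and the sufficient upper bound \eqref{E:criterion}.
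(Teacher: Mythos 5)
Your proposal follows essentially the same route as the paper's own proof: the same decomposition of $a_{j}$ via \eqref{E:trapezoidal_sum}, the same use of Lemma \ref{L:trapezoidal} to pass from the iteration-$j$ nodes $y_{0}+h^{(j)}\cdot n_{2}^{(j)}$ to the fixed quantity $y_{0}+h^{(1)}\cdot n_{3}^{(1)}$ through the monotonicity of $p$, and the same assembled inequality whose solution for $j$ yields \eqref{E:criterion_nec}. The directions of all your per-term bounds are correct, so the argument is sound and matches the paper's.
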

\begin{proof}
 Consider again the trapezoidal sum $a_{j}$ in \eqref{E:trapezoidal_sum}. The termination of Algorithm 1 implies that $a_{j}\leqslant b$, or
\[
A_{h^{(j)},n_{2}^{(j)}-j} - \sum_{i = n_{2}^{(j)}+1-j}^{n_{2}^{(j)}-1}h^{(j)}\cdot
p(y_{0}+h^{(j)}\cdot i)\leqslant b-\sum_{l,h^{(j)},n_{2}^{(j)}-1}.
\]
The right-hand side of this inequality is obviously bounded by $h^{(j)}\cdot p(y_{0}+h^{(j)}\cdot n_{2}^{(j)})$, due to the definition of $n_{2}^{(j)}$. Consequently,
\begin{equation}\label{E:inequality_nec}
 A_{h^{(j)},n_{2}^{(j)}-j} - \sum_{i = n_{2}^{(j)}+1-j}^{n_{2}^{(j)}-1}h^{(j)}\cdot
p(y_{0}+h^{(j)}\cdot i)\leqslant h^{(j)}\cdot p(y_{0}+h^{(j)}\cdot n_{2}^{(j)}).
\end{equation}
It is now necessary to find lower bounds for the two left-hand side terms. Interestingly, the proof of Theorem 1 would require to have zero on the right-hand side and the upper bounds of the left-hand side terms at their places. This would lead eventually to \eqref{E:criterion}, whereas we are working on a weaker condition that would apparently lead to \eqref{E:criterion_nec}.

Clearly, due to Lemma 2,
\[
 h^{(j)}\cdot n_{2}^{(j)}>h^{(j)}\cdot n_{3}^{(j)}\geqslant h^{(1)}\cdot n_{3}^{(1)},
\]
so, by decreasing behavior of $p(y)$,
\[
 A_{h^{(j)},n_{2}^{(j)}-j}> \frac{h^{(1)}}{2 j} \cdot(p(y_{0})-p(y_{0}+h^{(1)}\cdot n_{3}^{(1)}-h^{(1)})).
\]
Next, by the same reason,
\[
\sum_{i = n_{2}^{(j)}+1-j}^{n_{2}^{(j)}-1}h^{(j)}\cdot
p(y_{0}+h^{(j)}\cdot i)\leqslant p(y_{0}+h^{(j)}\cdot(n_{2}^{(j)}+1-j))\cdot h^{(j)}\cdot (j-1)\leqslant
\]
\[
 \leqslant p(y_{0}+h^{(1)}\cdot n_{3}^{(1)}-h^{(1)})\cdot h^{(1)}\cdot \left(1-\frac{1}{j}\right)
\]
for $j>1$. Also, the right-hand side of \eqref{E:inequality_nec} is bounded by $\frac{h^{(1)}}{j}\cdot p(y_{0}+h^{(1)}\cdot n_{3}^{(1)})$. So from \eqref{E:inequality_nec} there follows
\[
 \frac{h^{(1)}}{2j}\cdot (p(y_{0})-p(y_{0}+h^{(1)}\cdot n_{3}^{(1)}-h^{(1)}))-p(y_{0}+h^{(1)}\cdot n_{3}^{(1)}-h^{(1)})\cdot h^{(1)}\cdot \left(1-\frac{1}{j}\right)< 
\]
\[
<\frac{h^{(1)}}{j}\cdot p(y_{0}+h^{(1)}\cdot n_{3}^{(1)}).
\]
Solving this inequality for $j$ returns \eqref{E:criterion_nec}.
\end{proof}
\begin{remark}
Condition \eqref{E:criterion_nec} may be replaced by a stronger one:
 \begin{equation}\label{E:criterion_nec_1}
 j> 1 + \frac{1}{2}\cdot\frac{p(y_{0})-3\cdot p(y_{0}+h^{(1)}\cdot n_{3}^{(1)})}{2\cdot p(y_{0}+h^{(1)}\cdot n_{3}^{(1)}-h^{(1)})}.
\end{equation}
\end{remark}

Let the smallest integer $j$ satisfying \eqref{E:criterion_nec} be denoted $j_{n}$. Having now the necessary condition of convergence provided by Theorem 2, it is easy to see that Algorithm 1 is not efficient since it assumes than no information is provided on how fast the integrating method converges. Now that a lower bound for $j_{a}$ is known due to \eqref{E:criterion_nec}, it will be wiser to focus on searching $j_{a}$ only among those $j$ satisfying $j_{n}\leqslant j\leqslant j_{s}$. However, even knowing $j_{n}$ does not provide any better strategy of solving \eqref{E:integrating_equation_final} than simply picking $j:=j_{s}$ immediately and doing the lower rectangular integration once ( not counting the one made with initial step $h^{(1)}=\epsilon$ necessary for computing $j_{s}$ via \eqref{E:criterion} ) with the maximum cost. This may be explained by the fact that the search for $j_{a}$ itself may result in a total cost higher than that of a single integration with the smallest step $h^{(j_{s})}$.  

Suppose, for instance, the bisection method is used to find $j_{a}$. At the first try, $j_{1} \simeq \frac{1}{2}\cdot(j_{n}+j_{s})$. Assuming that no information other than the values of $j_{n}$ and $j_{s}$ is provided, the probability that $j_{1}$ will grant convergence is about 50 \%. The computational cost $C_{bisection,1}$ in this case is estimated roughly via the formula
\[
 C_{bisection,1} \approx \frac{j_{n}+j_{s}}{2}\cdot \frac{b}{\epsilon} + \frac{b}{\epsilon}.
\]
The last term comes from the very first integration with step $h^{(1)} = \epsilon$. It is necessary in order to obtain $j_{n}$ and $j_{s}$. At the same time, the computational cost $C_{real}$ that corresponds to the case $j = j_{s}$ is estimated via
\[
 C_{real} \approx j_{s}\cdot\frac{b}{\epsilon} + \frac{b}{\epsilon}.
\]
Obviously, $\frac{C_{bisection,1}}{C_{real}} > 0.5$, which means we do not even drop to the half of the maximum computational cost when we use $j = j_{1}$. The probability of 50 \% is not high enough to convince us to try $j_{1}$. Even worse, already the second iteration $j_{2} = \frac{1}{2}\cdot(j_{1}+j_{s})$ with 75 \% of success probability requires more computational time than the case with $j_{s}$ since
\[
 C_{bisection,2}\approx \frac{\frac{j_{n}+j_{s}}{2}+j_{s}}{2}\cdot \frac{b}{\epsilon} + C_{bisection,1} > C_{real}.
\]
So it is evident that the algorithm which uses $h^{(j_{s})}$ as soon as $j_{s}$ is provided, is the best choice from the point of view of computational efforts. It is formulated as follows.

\begin{algorithm}
Perform the following two actions.
\begin{enumerate}
 \item Execute the lower rectangular integration with step $h^{(1)}=\epsilon$ until inequality $\sum_{l,h^{(1)},n_{2}^{(1)}}\geqslant b$ is met. Check if $\sum_{t, h^{(1)}, n_{2}^{(1)}-1}\leqslant b$. If it is true, terminate the algorithm having set $y_{b}=h^{(1)}\cdot n_{2}^{(1)}$.
Else, continue to step 2.
 \item Find the smallest such integer $j$ that satisfies \eqref{E:criterion} or \eqref{E:criterion_simple}. Compute $h^{(j)} = \frac{\epsilon}{j}$. Execute the lower rectangular integration the second and the last time, with step $h^{(j)}$, setting $y_{b}=h^{(j)}\cdot n_{2}^{(j)}$. Terminate the algorithm. 
\end{enumerate}
\end{algorithm}

\section{Advantages and disadvantages}
The main advantage of the integrating method (and apparently its right for existence) is that it returns solution $y(b)$ of \eqref{E:equation} within provided tolerance $\epsilon$. This is what makes it unique and distinguished from the widely used Runge-Kutta or Adams methods that return the answer with an error in the form $O(h^{p})$.  

The other advantage is a stability of the method. Since it uses integration process, the method is always stable, regardless of given $\epsilon$ and integration step $h$. This is not always true for other solvers of ordinary differential equations. For example, forward Euler method may experience unstability thus becoming unusable for large time-steps.

Another obvious advantage is a simplicity of the algorithm. It is easy to understand and implement in any mathematical software.

One of the disadvantages of the integrating method is that it is not general and only works for scalar problems having form $\eqref{E:equation_main}$ and satifying the integrating conditions (A)-(D) imposed on functions $f(y)$ and $g(x)$. It is also important to know the value of $\int_{y_{0}}^{+\infty}p(y)dy$ ( or at least some approximation of it ) in case this integral converges. It may be challenging to use the integrating method without being sure whether the problem is solvable in general or solvable within a reasonable amount of time. The latter may be an issue if $b$ is very close to $c$ which is the right limit of the solution extension interval $[0, c)$.  A relatively high cost of the method comes from the integrating process with small step size and is considered another disadvantage.
\begin{remark}
All these advantages and disadvantages remind of similar situation with numerical methods for solving nonlinear algebraic equations. The bisection method is known for its ability to return the solution of $f(x)=0$ on interval $[a,b]$ within a given tolerance $\epsilon$ as long as $f(a)\cdot f(b)<0$. The method, however, cannot be extended to a class of vector equations. It also does not converge as fast as the Newton's method. The latter can also be used for vector equations. But in return, it cannot assuredly present the answer within the given tolerance $\epsilon$.
\end{remark}
Another property of the integrating method that may seem as a disadvantage is that it works towards finding $y(b)$ only, whereas all known step solvers return intermidiate points as well. They therefore present an approximation of the whole solution curve $y(x)$ on the presented mesh $x_{k}$, $0< x_{k}\leqslant b$. The fact that the integrating method manages to find $y(b)$ within tolerance $\epsilon$ does not necessarily imply that intermidiate points $y(x_{k})$ are approximated within $\epsilon$ too. In order to compute these intermidiate points, the integrating method may be applied separately to each point. This will result in a very high total cost.

The last problem may be avoided with help of inequality $\eqref{E:criterion}$.

\begin{theorem}
 Let $\{0, x_{1}, x_{2}, ... , x_{N-1}, x_{N}\}$ with $x_{N} = b$ be the mesh provided by a user and $n_{2}^{(1)}$ is the smallest such integer that $\sum_{l,h^{(1)}, n_{2}^{(1)}}\geqslant b$ with $h^{(1)}=\epsilon$. Let $j$ be an integer that satisfies \eqref{E:criterion} or \eqref{E:criterion_simple}, and the lower rectangular integration is executed once with step $h^{(j)} = \frac{\epsilon}{j}$. For every $x_{i}$ set $y_{x_{k}}:= h^{(j)}\cdot n_{2,k}^{(j)}$, where $n_{2,k}^{(j)}$ is the smallest integer such that $\sum_{l,h^{(j)},n_{2,k}^{(j)}}\geqslant x_{k}$. Then the following holds true:
\[
 |y(x_{k})-y_{x_{k}}|<\epsilon\mbox{ }\forall k,\mbox{ } 1\leqslant k\leqslant N.
\]
\end{theorem}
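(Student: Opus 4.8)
The plan is to show that the single integer $j$ obtained from the right endpoint $b$ is simultaneously admissible for every mesh point $x_k\leqslant b$, so that the one lower rectangular pass with step $h^{(j)}$ certifies tolerance $\epsilon$ at each $x_k$ at once. Fix $k$. Since $y(x_k)$ is defined by $\int_{y_0}^{y(x_k)}p(y)\,dy=x_k$ and $n_{2,k}^{(j)}$ is the smallest integer with $\sum_{l,h^{(j)},n_{2,k}^{(j)}}\geqslant x_k$, it suffices, exactly as in the derivation of the method's idea from \eqref{E:inequality}, to establish the two-sided bracket
\[
\int_{y_0}^{h^{(j)}(n_{2,k}^{(j)}-j)}p(y)\,dy< x_k<\int_{y_0}^{h^{(j)}n_{2,k}^{(j)}}p(y)\,dy .
\]
Because the two limits of integration differ by exactly $j\cdot h^{(j)}=\epsilon$, this bracket places both $y(x_k)$ and the returned node $y_{x_k}=h^{(j)}n_{2,k}^{(j)}$ inside a common interval of length $\epsilon$, which yields $|y(x_k)-y_{x_k}|<\epsilon$.

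The right-hand inequality is immediate: by the definition of $n_{2,k}^{(j)}$ one has $\sum_{l,h^{(j)},n_{2,k}^{(j)}}\geqslant x_k$, and the lower rectangular sum strictly underestimates the integral by \eqref{E:inequality}. The substance is the left-hand inequality, and since the trapezoidal sum strictly overestimates the integral it is enough to prove the termination-type bound $\sum_{t,h^{(j)},n_{2,k}^{(j)}-j}\leqslant x_k$, i.e. that the very inequality that terminates Algorithm 1 at the endpoint $b$ also holds at each interior node $x_k$.

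This is where the choice of $j$ enters, and it is the main obstacle. Let $n_{2,k}^{(1)}$ be the smallest integer with $\sum_{l,h^{(1)},n_{2,k}^{(1)}}\geqslant x_k$; since $x_k\leqslant b$ and $\sum_{l,h^{(1)},N}$ increases in $N$, one has $n_{2,k}^{(1)}\leqslant n_2^{(1)}$. I would then show that the right-hand side of \eqref{E:criterion}, viewed as a function of the node index $n$, namely $1+\tfrac12\bigl(p(y_0)-p(y_0+h^{(1)}(n-1))\bigr)/p(y_0+h^{(1)}n)$, is nondecreasing in $n$: as $n$ grows the numerator increases and the denominator decreases, both because $p$ is decreasing and positive. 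Hence the threshold evaluated at $n_{2,k}^{(1)}$ is no larger than the threshold at $n_2^{(1)}$, so the $j$ chosen for $b$ automatically satisfies the $x_k$-analogue of \eqref{E:criterion}.

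It then remains to replay the proof of Theorem 1 verbatim with $x_k$ in place of $b$ and $n_{2,k}^{(\cdot)}$ in place of $n_2^{(\cdot)}$. Lemma 1 applies unchanged to each $x_k$, giving $h^{(j)}n_{2,k}^{(j)}\leqslant h^{(1)}n_{2,k}^{(1)}$, and the same bounding of the three terms of the trapezoidal sum $a_j$ then delivers $\sum_{t,h^{(j)},n_{2,k}^{(j)}-j}\leqslant x_k$. Combined with the bracket above this closes the argument, the case $k=N$ recovering the endpoint statement; I would only take care that the rectangular and trapezoidal bounds are used strictly, so that the conclusion is the strict $|y(x_k)-y_{x_k}|<\epsilon$ rather than merely $\leqslant\epsilon$.
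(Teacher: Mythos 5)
Your proposal is correct and follows essentially the same route as the paper: both arguments reduce the claim to showing that the right-hand side of \eqref{E:criterion} is monotone in the node index (numerator grows, denominator shrinks, since $p$ is positive and decreasing), so that $n_{2,k}^{(1)}\leqslant n_{2}^{(1)}$ forces the $j$ chosen for $b$ to satisfy the $x_{k}$-analogue of \eqref{E:criterion}, after which Theorem 1 applies at each $x_{k}$. You merely make explicit the bracketing via \eqref{E:inequality} and the replay of Theorem 1's proof, which the paper leaves implicit.
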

The above theorem assures that if $j$ satisfies \eqref{E:criterion} or \eqref{E:criterion_simple}, concerned only with the final node $x_{N}=b$, the solution at the intermidiate mesh nodes is already to be evaluated within tolerance $\epsilon$ during the lower rectangular integration aiming to find $y_{b}$. This is a very relieving result. 
\begin{proof}
 Pick any intermidiate point $x_{k}$, $0<x_{k}<b$. When the integrating method is used specifically to find $y_{x_{k}}$, then it will be sufficient to use the lower rectangular integration with step size $h^{(j_{k})} = \frac{\epsilon}{j_{k}}$, where integer $j_{k}$ is the smallest integer satisfying
\[
 j_{k}\geqslant 1 + \frac{1}{2}\cdot\left(\frac{p(y_{0})-p(y_{0}+\epsilon\cdot n_{2,k}^{(1)}-\epsilon)}{p(y_{0}+\epsilon\cdot n_{2,k}^{(1)})}\right),
\]
with $n_{2,k}^{(1)}$ being the smallest such integer that satisfies $\sum_{l,\epsilon, n_{2,k}^{(1)}}\geqslant x_{k}$. Since $x_{k}<b$, it is obvious that $n_{2,k}^{(1)}\leqslant n_{2,N}^{(1)}=n_{2}^{(1)}$. So
\[
 p(y_{0}) - p(y_{0}+\epsilon\cdot n_{2}^{(1)}-\epsilon)\geqslant p(y_{0})-p(y_{0}+\epsilon\cdot n_{2,k}^{(1)}-\epsilon).
\]
By the same reason,
\[
 \frac{p(y_{0}) - p(y_{0}+\epsilon\cdot n_{2}^{(1)}-\epsilon)}{p(y_{0}+\epsilon\cdot n_{2}^{(1)})}\geqslant \frac{p(y_{0})-p(y_{0}+\epsilon\cdot n_{2,k}^{(1)}-\epsilon)}{p(y_{0}+\epsilon\cdot n_{2,k}^{(1)})}.
\]
So we see that if $j$ satisfies \eqref{E:criterion}, it automatically satisfies
\[
 j\geqslant 1 + \frac{1}{2}\cdot\left(\frac{p(y_{0})-p(y_{0}+\epsilon\cdot n_{2,k}^{(1)}-\epsilon)}{p(y_{0}+\epsilon\cdot n_{2,k}^{(1)})}\right).
\]
This proves that $j\geqslant j_{k}$. This assures the integration step $h^{(j)}=\frac{\epsilon}{j}$ is small enough to guarantee that $|y(x_{k})-y_{x_{k}}|< \epsilon$ for arbitrary $x_{k}$, $0<x_{k}< b$.
\end{proof}
\begin{remark}
Similar proof may be conducted in case of inequality \eqref{E:criterion_simple}.
\end{remark}
\begin{remark}
Similar result exists for the necessary condition \eqref{E:criterion_nec}. It simply states that the larger integer $k$ is, the stronger condition \eqref{E:criterion_nec} for point $x_{k}$ is. In other words, if \eqref{E:criterion_nec} is satisfied for the final point $b$, it is automatically satisfied for the previous points $x_{k}$ of the mesh. The same may be said in case of inequality \eqref{E:criterion_nec_1}. This result seem to present no value for the current work though.
\end{remark}
Now that Theorem 2 is proven, Algorithm 2 may be assuredly used to find the approximate solution of \eqref{E:equation} on arbitrary mesh $\{0, x_{1}, ..., x_{N-1}, x_{N}\}$, $x_{N} = b$, with almost the same cost as for solely the final point $b$.
 
\begin{conjecture}
 The integrating method may be improved to cover a wider area of initial-value problems than just those restricted by the integrating conditions (A)-(D). If it is possible to track all the inflection points of function $\frac{1}{f(y)}$ from \eqref{E:equation_main} precisely, it may be possible to use a lower rectangular integration on intervals where 
\[
\left(\frac{1}{f(y)}\right)^{''} > 0
\]
and a higher rectangular integration on intervals where
\[
\left(\frac{1}{f(y)}\right)^{''} < 0.
\]
Using relations analogous to \eqref{E:inequality} may help construct algorithms to find solutions within tolerance $\epsilon$ even for this more general class of problems. This may be a topic of further research.
\end{conjecture}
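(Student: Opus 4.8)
\emph{A plan for establishing the conjecture.} The goal is to replace the global convexity hypothesis (D) by the weaker requirement that $p(y)=1/f(y)$ have only finitely many inflection points $y_{0}=z_{0}<z_{1}<\dots<z_{m}$, all known exactly, while retaining (A)--(C), so that $p$ is still positive and strictly decreasing (indeed $p'=-f'/f^{2}<0$). Writing $P(z):=\int_{y_{0}}^{z}p(y)\,dy$, the whole method rests on producing \emph{computable} functions $L(z)\le P(z)\le U(z)$ whose gap $U(z)-L(z)$ is $O(h)$ and can be driven to zero; the solution $y(b)=P^{-1}(b)$ is then bracketed by locating the grid nodes at which $U$ and $L$ cross the level $b$. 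I would therefore first state a precise theorem: the mixed-rule algorithm below returns $y_{b}$ with $|y(b)-y_{b}|<\epsilon$, and then prove it by mimicking the original argument with $L,U$ in place of the lower rectangular and trapezoidal sums.

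\emph{The per-interval bracket, analogous to \eqref{E:inequality}.} On a single subinterval $[u,u+h]$ put $\ell=h\,p(u+h)$, $r=h\,p(u)$ and $t=\tfrac12(\ell+r)$, the lower rectangular, upper rectangular and trapezoidal contributions. Monotonicity alone gives $\ell\le\int_{u}^{u+h}p\le r$, independently of convexity. If $p$ is convex on the subinterval then the trapezoid lies above the curve and $\int_{u}^{u+h}p\le t$, so the tight bracket is $[\ell,t]$; if $p$ is concave the trapezoid lies below and $\int_{u}^{u+h}p\ge t$, so the tight bracket is $[t,r]$. This is exactly the conjecture's prescription: the \emph{lower} rectangle supplies the outer bound on convex pieces and the \emph{higher} rectangle on concave pieces, while the trapezoid supplies the inner bound in both cases. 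Crucially the bracket width is $\tfrac{h}{2}(p(u)-p(u+h))$ in either regime, so when the per-interval lower ends are summed into $L(z)$ and the upper ends into $U(z)$, the gap $U(z)-L(z)$ telescopes --- regardless of the pattern of convex and concave pieces --- to $\tfrac{h}{2}\bigl(p(y_{0})-p(z)\bigr)$, which is $O(h)$ and vanishes as $h\to0$.

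\emph{From the bracket to the solution.} Since $p>0$, $P$ is strictly increasing, so $U(z_{1})\le b$ forces $P(z_{1})\le b$ hence $z_{1}\le y(b)$, while $b\le L(z_{2})$ forces $b\le P(z_{2})$ hence $y(b)\le z_{2}$; thus the nodes where $U$ and $L$ cross the level $b$ sandwich $y(b)$. As in the proof of Theorem \ref{T:basic}, refining $h^{(j)}=\epsilon/j$ shrinks the telescoped gap $\tfrac{h^{(j)}}{2}(p(y_{0})-p(z))$ and hence the separation of these two crossing nodes; isolating $j$ in the resulting inequality would give a termination criterion of the same shape as \eqref{E:criterion}, proving finite convergence.

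\emph{The main obstacle.} The one genuinely new difficulty is that a uniform mesh of step $h^{(j)}=\epsilon/j$ will, in general, contain subintervals that \emph{straddle} an inflection point $z_{i}$; on such an interval $p$ is neither convex nor concave and neither the trapezoidal upper nor lower bound is valid, so the clean per-interval bracket breaks. I see two ways to handle this, and settling on one is the crux of the work. The first is to force every $z_{i}$ to be a node, e.g.\ by integrating separately on each segment $[z_{i},z_{i+1}]$ with its own refined step, at the cost of a non-uniform mesh and a re-derivation of the cost estimates. The second, cleaner route is to keep the uniform mesh and simply use the crude monotone bracket $[\ell,r]$ of width $h\bigl(p(u)-p(u+h)\bigr)$ on the straddling subintervals; since there are at most $m$ of them and $m$ is fixed, their total contribution to $U-L$ is $O(mh)=O(h)$ and is absorbed into the same tolerance argument. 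Verifying that this absorption still yields a criterion of the form \eqref{E:criterion}, with $m$ entering only through an additive constant, is the key step I expect to require the most care.
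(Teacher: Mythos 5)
The statement you are addressing is a conjecture: the paper offers no proof of it and explicitly defers the matter to future research, so there is no in-paper argument to compare against, and your proposal must be judged on its own merits. On those merits it is a faithful and essentially correct development of the route the conjecture itself sketches. Your per-interval brackets are right: for decreasing $p$, monotonicity alone gives $h\,p(u+h)\leqslant\int_{u}^{u+h}p(y)\,dy\leqslant h\,p(u)$, while convexity (resp. concavity) places the trapezoid above (resp. below) the integral, so the tight bracket has width $\frac{h}{2}\bigl(p(u)-p(u+h)\bigr)$ in both regimes and the gap $U-L$ telescopes to $\frac{h}{2}\bigl(p(y_{0})-p(z)\bigr)$, exactly the role played by $A_{h,N}$ in the paper; when there are no inflection points your construction reduces verbatim to the paper's lower-rectangular/trapezoidal pair and to \eqref{E:inequality}, which is a good sanity check. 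You also correctly isolate the one genuinely new obstacle, namely cells straddling an inflection point, and your second remedy is sound: the crude monotone bracket is valid on any cell, and its extra width is again a piece of the same telescoping sum, so $U-L\leqslant h\bigl(p(y_{0})-p(z)\bigr)$ holds regardless of the convexity pattern, at worst doubling the constant in a criterion of the shape \eqref{E:criterion}. One point worth making explicit when you write this up: the analogue of Lemma 1, on which the termination analysis of Theorem \ref{T:basic} rests, survives the relaxation of condition (D), because its proof uses only that $p$ decreases and that the mesh with $h^{(j)}=\frac{\epsilon}{j}$ refines the initial one; your mixed sums $L$ likewise increase under such refinement (lower rectangular sums increase for decreasing $p$, trapezoidal sums increase for concave $p$, and straddling cells refine into cells handled by one of these two cases), so the bound $n_{2}^{(j)}\leqslant j\cdot n_{2}^{(1)}$ and the lower bound $p\bigl(y_{0}+h^{(j)}(n_{2}^{(j)}-1)\bigr)\geqslant p\bigl(y_{0}+h^{(1)}n_{2}^{(1)}\bigr)$ both carry over. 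What you have is, appropriately for a conjecture, a plan rather than a finished proof: the explicit termination criterion with its $m$-dependent constant and the cost accounting for the non-uniform-mesh alternative remain to be worked out, as you yourself acknowledge, but no step of the outline looks like it would fail.
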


\section{Numerical experiments}
Numerical experiments were conducted to test both Algorithm 1 and 2 on two initial-value problems:
\begin{equation}\label{E:test1}
\begin{cases}
 \frac{dy}{dx} = y + 1,\\
 y(0) = 0,
\end{cases}
\end{equation}
with exact solution $y(x) = e^{x}-1$ and mesh $x_{k} = 0.05\cdot k$, $k\leqslant20$, and
\begin{equation}\label{E:test2}
\begin{cases}
 \frac{dy}{dx} = y^{2},\\
 y(0) = 0.5,
\end{cases}
\end{equation}
with exact solution $y = \frac{1}{2-x}$ and mesh $x_{k} = 0.05\cdot k$, $k\leqslant32$. It is important to note that in the second case the solution may only be extended up to $x = 2$ since the integral
\[
 \int_{0.5}^{+\infty}\frac{dy}{y^{2}}
\]
converges and is equal to $2$. In the first case,
\[
 \int_{0}^{+\infty}\frac{dy}{y+1} = +\infty,
\]
so no problems are encountered. 

Tolerance was $\epsilon = 10^{-4}$.
Both algorithms were implemented in the mathematical software Octave. Algorithm 1 was applied separately to each node $x_{k}$. The computational time was measured on a computer with Intel Core i7 processor with 2.80 GHz frequency. The computation started with doubled tolerance $h^{(1)} = 2\epsilon$ and the approximate solution $y_{x_{k}}$ was found as a mid-point between $y_{0}+h^{(j)}\cdot n_{1}^{(j)}$ and $y_{0}+h^{(j)}\cdot n_{2}^{(j)}$ in both cases. Tables 1 and 2 present results of Algorithm 1 applied to problems \eqref{E:test1} and \eqref{E:test2}. The first case required 4.3 seconds, and the second one required 75.6 seconds. 

\begin{center}
\begin{tabular}{||l|c|c|c|c|c||}
 \hline $x_{k}$ & $y_{x_{k}}$ & $|y(x_{k})-y_{x_{k}}|\cdot 10^{4}$ & The actual number of iterations & $j_{n}$ & $j_{s}$\\
 \hline 0.05 & 0.0513 & 0.289 & 1 & 1 & 2 \\
 \hline 0.1 & 0.1051 & 0.709 & 1 & 1 & 2 \\
 \hline 0.15 & 0.1619 & 0.658 & 1 &1 & 2 \\
 \hline 0.2 & 0.2215 & 0.972 & 1 &1 & 2 \\
 \hline 0.25 & 0.2841 & 0.746 & 1 &1 & 2 \\
 \hline 0.3 & 0.3499 & 0.412 & 1 & 1 &2 \\
 \hline 0.35 & 0.4191 & 0.326 & 1 & 1 &2 \\
 \hline 0.4 & 0.4919 & 0.753 & 1 & 1 &2 \\
 \hline 0.45 & 0.5683 & 0.122 & 1 & 1 &2 \\
 \hline 0.5 & 0.6487 & 0.213 & 1 & 1 &2 \\
 \hline 0.55 & 0.7333 & 0.4698 & 1 & 1 &2 \\
 \hline 0.6 & 0.8221 & 0.188 & 2 & 1 &2 \\
 \hline 0.65 & 0.9155 & 0.408 & 2 & 1 &2 \\
 \hline 0.7 & 1.0138 & 0.473 & 2 & 1 &2 \\
 \hline 0.75 & 1.1171 & 0.9998 & 1 & 1 &2 \\
 \hline 0.8 & 1.2256 & 0.591 & 2 & 1 &2 \\
 \hline 0.85 & 1.3397 & 0.531 & 1 & 1 &2 \\
 \hline 0.9 & 1.4597 & 0.969 & 1 & 1 &2 \\
 \hline 0.95 & 1.5857 & 0.097 & 2 & 1 &2 \\
 \hline 1.0 & 1.7183 & 0.182 & 2 & 1 &2 \\
 \hline
\end{tabular}
\end{center}
\begin{center}
 Table 1: results of Algorithm 1 applied to each $x_{k}$ for solving \eqref{E:test1}
\end{center}

Results of Algorithm 2 applied to \eqref{E:test1} and \eqref{E:test2} are presented in Tables 3 and 4 respectively. The initial step $h^{(1)} = 2\epsilon$ was divided by 2 in case \eqref{E:test1} and by 14 in case \eqref{E:test2}. This agrees with the cell value of the last line and the last column of Tables 1 and 2 respectively. Algorithm 2 significantly speeds up computations since it only took 0.96 seconds to obtain the results of Table 3 and 6.04 seconds for Table 4. This may be explained by the fact that, although inequality \eqref{E:criterion} is not a criterion of the algorithm termination, it returns a value close to the actual minimum number $j$ by which $h^{(1)}$ must be divided in order to grant required tolerance at every fixed node $x_{k}$. Both Tables 1 and 2 show that the difference between $j_{s}$ and $j_{s}$ is no larger than 1 for all nodes. If the fact that $j_{n}$ and $j_{s}$ are close to each other remains true in many other cases, it is another plus towards Algorithm 2. The author recommends using it in real life applications where a need to reach the desired accuracy with 100 \% guarantee is higher than a need for computational speed and efficiency.

\pagebreak

\hspace{10pt}
\begin{center}
\begin{tabular}{||l|c|c|c|c|c||}
 \hline $x_{k}$ & $y_{x_{k}}$ & $|y(x_{k})-y_{x_{k}}|\cdot 10^{4}$ & The actual number of iterations & $j_{n}$ & $j_{s}$\\
 \hline 0.05 & 0.5129 & 0.795 & 1 & 1 & 2 \\
 \hline 0.1 & 0.5263 & 0.158 & 1 & 1 &2 \\
 \hline 0.15 & 0.5405 & 0.405 & 1 & 1 &2 \\
 \hline 0.2 & 0.5555 & 0.556 & 1 & 1 &2 \\
 \hline 0.25 & 0.5715 & 0.714 & 1 & 1 &2 \\
 \hline 0.3 & 0.5883 & 0.647 & 1 & 1 &2 \\
 \hline 0.35 & 0.6061 & 0.394 & 1 & 1 &2 \\
 \hline 0.4 & 0.6250 & 0.000 & 2 & 1 &2 \\
 \hline 0.45 & 0.6451 & 0.613 & 2 & 1 &2 \\
 \hline 0.5 & 0.6667 & 0.333 & 1 & 1 &2 \\
 \hline 0.55 & 0.6897 & 0.448 & 1 & 1 &2 \\
 \hline 0.6 & 0.7143 & 0.143 & 1 & 1 &2 \\
 \hline 0.65 & 0.7408 & 0.593 & 2 & 1 &2 \\
 \hline 0.7 & 0.7693 & 0.692 & 1 & 1 &2 \\
 \hline 0.75 & 0.8000 & 0.000 & 2 & 1 &2 \\
 \hline 0.8 & 0.8334 & 0.667 & 2 & 1 &2 \\
 \hline 0.85 & 0.8696 & 0.348 & 2 & 2 &3 \\
 \hline 0.9 & 0.9091 & 0.091 & 3 & 2 &3 \\
 \hline 0.95 & 0.9524 & 0.524 & 3 &2 & 3 \\
 \hline 1.0 & 1.0000 & 0.333 & 3 & 2 &3 \\
 \hline 1.05 & 1.0527 & 0.684 & 3 & 2 &3 \\
 \hline 1.1 & 1.1112 & 0.556 & 3 & 2 &3 \\
 \hline 1.15 & 1.1766 & 0.961 & 3 & 3 &4 \\
 \hline 1.2 & 1.2501 & 0.500 & 4 & 3 &4 \\
 \hline 1.25 & 1.3334 & 0.667 & 4 & 4 &5 \\
 \hline 1.3 & 1.4286 & 0.486 & 5 & 4 &5 \\
 \hline 1.35 & 1.5385 & 0.785 & 5 & 5 &6 \\
 \hline 1.4 & 1.6667 & 0.619 & 7 & 6 &7 \\
 \hline 1.45 & 1.8183 & 0.896 & 7 & 7 &8 \\
 \hline 1.5 & 2.0001 & 0.778 & 9 & 8 &9 \\
 \hline 1.55 & 2.2223 & 0.978 & 10 & 10 &11 \\
 \hline 1.6 & 2.5001 & 0.857 & 14 & 13 &14 \\
 \hline
\end{tabular}
\end{center}
\begin{center}
 Table 2: results of Algorithm 1 applied to each $x_{k}$ for solving \eqref{E:test2}
\end{center}

\pagebreak

\hspace{10pt}
\begin{center}
\begin{tabular}{||l|c|c||}
 \hline $x_{k}$ & $y_{x_{k}}$ & $|y(x_{k})-y_{x_{k}}|\cdot 10^{4}$ \\
 \hline 0.05 & 0.05120 & 0.711 \\
 \hline 0.1 & 0.1051 & 0.709 \\
 \hline 0.15 & 0.1618 & 0.342 \\
 \hline 0.2 & 0.2214 & 0.028 \\
 \hline 0.25 & 0.2840 & 0.254 \\
 \hline 0.3 & 0.3498 & 0.588 \\
 \hline 0.35 & 0.4190 & 0.675 \\
 \hline 0.4 & 0.4918 & 0.247 \\
 \hline 0.45 & 0.5683 & 0.122 \\
 \hline 0.5 & 0.6487 & 0.213 \\
 \hline 0.55 & 0.7332 & 0.530 \\
 \hline 0.6 & 0.8221 & 0.188 \\
 \hline 0.65 & 0.9155 & 0.408 \\
 \hline 0.7 & 1.0138 & 0.473 \\
 \hline 0.75 & 1.1170 & 0.0002 \\
 \hline 0.8 & 1.2256 & 0.591 \\
 \hline 0.85 & 1.3397 & 0.531 \\
 \hline 0.9 & 1.4596 & 0.031 \\
 \hline 0.95 & 1.5857 & 0.097 \\
 \hline 1.0 & 1.7183 & 0.182 \\
 \hline
\end{tabular}
\end{center}
\begin{center}
 Table 3: results of Algorithm 2 applied to \eqref{E:test1}
\end{center}

\begin{center}
\begin{tabular}{||l|c|c||}
 \hline $x_{k}$ & $y_{x_{k}}$ & $|y(x_{k})-y_{x_{k}}|\cdot 10^{4}$\\
 \hline 0.05 & 0.5127 & 0.919 \\
 \hline 0.1 & 0.5262 & 0.872 \\
 \hline 0.15 & 0.5404 & 0.977 \\
 \hline 0.2 & 0.5555 & 0.841 \\
 \hline 0.25 & 0.5713 & 0.8571\\
 \hline 0.3 & 0.5881 & 0.924  \\
 \hline 0.35 & 0.6060 & 0.892 \\
 \hline 0.4 & 0.6249 & 0.857 \\
 \hline 0.45 & 0.6451 & 0.899\\
 \hline 0.5 & 0.6666 & 0.810  \\
 \hline 0.55 & 0.6896 & 0.837\\
 \hline 0.6 & 0.7142 & 0.857  \\
 \hline 0.65 & 0.7407 & 0.836 \\
 \hline 0.7 & 0.7691 & 0.879 \\
 \hline 0.75 & 0.7999 & 0.857 \\
 \hline 0.8 & 0.8333 & 0.762  \\
 \hline 0.85 & 0.8695 & 0.795 \\
 \hline 0.9 & 0.9090 & 0.766 \\
 \hline 0.95 & 0.9523 & 0.810 \\
 \hline 1.0 & 0.9999 & 0.714  \\
 \hline 1.05 & 1.0526 & 0.744 \\
 \hline 1.1 & 1.1110 & 0.683  \\
 \hline 1.15 & 1.1764 & 0.563 \\
 \hline 1.2 & 1.2499 & 0.571  \\
 \hline 1.25 & 1.3333 & 0.476 \\
 \hline 1.3 & 1.4285 & 0.429  \\
 \hline 1.35 & 1.5384 & 0.330 \\
 \hline 1.4 & 1.6666 & 0.238 \\
 \hline 1.45 & 1.8182 & 0.104 \\
 \hline 1.5 & 2.0000 & 0.143  \\
 \hline 1.55 & 2.2223 & 0.349 \\
 \hline 1.6 & 2.5001 & 0.857 \\
 \hline
\end{tabular}
\end{center}
\hspace{2pt}
\begin{center}
 Table 4: results of Algorithm 2 applied to \eqref{E:test2}
\end{center}

\renewcommand\refname{Related reading}


\begin{thebibliography}{9}
\bibitem {ODE} \textsc{Ascher U. M., Petzold L. R.}:\ \textit{Computer methods for ordinary differential equations and differential-algebraic equations}, Society for Industrial and Applied Mathematics, USA, 1998.

\bibitem {Numerical analysis} \textsc{Atkinson K.}:\ \textit{An introduction to numerical analysis}, Wiley, the second edition, USA, 1989.

\end{thebibliography}
\end{document}